\documentclass[12pt,a4paper]{amsart}

\usepackage[a4paper,margin=20mm,top=25mm]{geometry}

\usepackage{hyperref}
\usepackage[utf8]{inputenc} 

\usepackage{amsfonts, amsmath, amssymb, amsgen, amsthm}
\usepackage{newtxtext, newtxmath}

\usepackage[all]{xy}
\usepackage{enumerate}
\usepackage{xcolor,listings,algpseudocode}
\lstset{language=lisp,keywordstyle=\color{blue},
  basicstyle=\ttfamily\footnotesize,breaklines=true}

\setlength{\parindent}{0cm}
\setlength{\parskip}{2mm}

\newcommand{\C}[1]{\mathcal{#1}}

\renewcommand{\leq}{\leqslant}
\renewcommand{\geq}{\geqslant}

\numberwithin{equation}{section}

\newtheorem{theorem}{Theorem}[section]
\newtheorem{proposition}[theorem]{Proposition}

\title{Enumerating Labeled Graphs that Realize a Fixed Degree Sequence}

\author{Atabey Kaygun}

\address{Istanbul Technical University, Istanbul, Turkey.}
\email{kaygun@itu.edu.tr}

\begin{document}
\maketitle

\begin{abstract}
  A finite non-increasing sequence of positive integers
  $d = (d_1\geq \cdots\geq d_n)$ is called a \emph{degree sequence} if
  there is a graph $G = (V,E)$ with $V = \{v_1,\ldots,v_n\}$ and
  $\deg(v_i)=d_i$ for $i=1,\ldots,n$.  In that case we say that the
  graph $G$ \emph{realizes the degree sequence} $d$.  We show that the
  exact number of labeled graphs that realize a fixed degree sequence
  satisfies a simple recurrence relation.  Using this relation, we
  then obtain a recursive algorithm for the exact count.  We also show
  that in the case of regular graphs the complexity of our algorithm
  is better than the complexity of the same enumeration that uses
  generating functions.
\end{abstract}

\section*{Introduction}

A finite non-increasing sequence of positive integers
$d_1\geq \cdots\geq d_n$ is called a \emph{degree sequence} if there
is a graph $(V,E)$ with $V = \{v_1,\ldots,v_n\}$ and $\deg(v_i)=d_i$
for $i=1,\ldots,n$. In that case, we say that the graph $G$
\emph{realizes the degree sequence} $d$.  In this article, in
Theorem~\ref{thm:main} we give a remarkably simple recurrence relation
for the exact number of labeled graphs that realize a fixed degree
sequence $(d_1,\ldots,d_n)$.  We also give an algorithm and a concrete
implementation to explicitly count three classes of labeled graphs for
a moderate number of vertices.

There is an extensive volume of research on the asymptotics of the
number of graphs that realize a fixed degree sequence. We refer the
reader to Wormald's excellent ICM lecture~\cite{Wormald18} for an
comprehensive survey, and references therein.  As for the exact number
of graphs that realize a fixed degree sequence, Read obtains
enumeration formulas in~\cite{Read59} and \cite{Read60} as
applications of Polya's
\emph{Hauptsatz}~\cite{Polya39,PolyaRead87}. However, Read himself
admits
\begin{quote}
  ``It may readily be seen that to evaluate the above expressions in
  particular cases may involve an inordinate amount of
  computation.''~\cite[Sect.7]{Read60}
\end{quote}
But we encountered no explicit complexity analysis of Read's formulas
in our search in the literature.  On the other hand, in~\cite{McKay83}
McKay writes explicit generating polynomials whose complexities can
readily be calculated, and in which coefficients of certain monomials
yield the exact number of different classes of labeled graphs. In
particular, he writes a generating polynomial (see
Equation~\eqref{eq:McKay}), whose computation complexity is
$\C{O}(2^{n^2/2})$, in which the coefficient of the monomial
$x_1^m\cdots x_n^m$ gives the exact count of labeled $m$-regular
graphs on $n$-vertices.

Our recurrence relation works for all degree sequences, but for those
degree sequences where there is a uniform upper limit $m$ for the
degrees, our algorithm has the worst-case complexity of
$\C{O}(n^{mn})$.  This means, in the specific case of $m$-regular
graphs we achieve a better complexity than generating polynomials.

While factorial-like worst-case complexity of the
enumeration~\eqref{eq:enumeration} may render practical calculations
difficult, the fact that it is recursive allows us to employ
computational tactics such as \emph{dynamic
  programming}~\cite{Bellman57,LewMauch07} or
\emph{memoization}~\cite{Michie68} to achieve better average
complexity.  We explore this avenue in our implementation given in the
Appendix.  To demonstrate of the versatility of our recurrence
relation and the resulting implementation, we tabulate the number of
$m$-regular labeled graphs, the number of labeled graphs that realize
the same degree sequence with binary trees, and the number of labeled
graphs that realize the same degree sequence with complete bipartite
graphs.

One can also read a given degree sequence $(d_1,\ldots,d_n)$ as a
partition of $N = \sum_i d_i$.  The Erdős-Gallai
Theorem~\cite{ErdosGallai, Chodum:ErdosGallai} tells us when such a
partition is realizable as a degree sequence, or one can also use
Havel-Hakimi algorithm to decide whether the given partition is
realizable~\cite{Havel55,Hakimi62}.  Now, one can also use our
enumeration to decide whether given a degree sequence is realizable,
but admittedly, the Havel-Hakimi algorithm has a much better
complexity.

% Finally, to clear any possible confusion, we would like to mention
% here that there is a whole orthogonal line of research to our work
% in which instead of counting the set of graphs that realize a fixed
% degree sequence as we do, one can enumerate all degree sequences
% among all integer partitions of a fixed
% integer~\cite{BarnesSavage95a,BarnesSavage97,Kohnert04,Wang19}.

\subsection*{Plan of the article} We prove our recurrence relation,
analyze its complexity and compare it with generating functions for
regular graphs in Section~\ref{Sect:Enumeration}.  We present explicit
calculations we made in Section~\ref{Sect:Calculations}, and the code
we used performing the calculations in the Appendix.

\subsection*{Notations and conventions}

We assume all graphs are simple, labeled, and undirected throughout
the article.

\subsection*{Acknowledgments}

This article was written while the author was on academic leave at
Queen’s University in Canada from Istanbul Technical University. The
author would like to thank both universities for their support.

\section{Enumerating Graphs That Realize a Fixed Degree Sequence}
\label{Sect:Enumeration}

\subsection{The recurrence relation}

Assume $d=(d_1,\ldots,d_n)$ is a non-increasing sequence of strictly
positive integers $d_i>0$.  Let us consider the trivial cases first:
It is clear that there is a single graph on the empty sequence
$\epsilon$: the empty graph.  Also, in case $n=1$, the only case for
which the sequence $(d_1)$ is realized by a graph is when $d_1=0$
which is excluded by our assumption. So, the count is 0 for all
$(d_1)$ for $d_1>0$.  We also exclude the case where the sum
$\sum_i d_i$ is odd since such sequences cannot be realized as degree
sequences because of the Hand-Shake Lemma.

We assume $n>1$ and the sum $\sum_i d_i$ is even.  If we consider the
vertex $x_n$ we see that it needs to be connected to exactly $d_n$
vertices in the set $\{x_1,\ldots,x_{n-1}\}$.  We need to consider the
set of all subsets of $\{x_1,\ldots,x_{n-1}\}$ of size $d_n$ to
enumerate all possibilities.  So, let $S$ be an arbitrary subset of
$\{1,\ldots,n-1\}$ of size $d_n$, and let $\chi_S$ be the
characteristic function of the set $S$.  Every graph in which $x_n$ is
connected to each vertex in $\{x_i\mid i\in S\}$ realizes the same
degree sequence
\begin{equation}
  \label{eq:1}
  (d_1 - \chi_S(1),\ldots, d_{n-1} - \chi_S(n-1))
\end{equation}
if we remove $x_n$ and all the edges connected to $x_n$.  Let us write
$(d_1,\ldots,d_{n-1})\slash S$ for the sequence~\eqref{eq:1} after we
reorder the sequence in descending order and remove all 0's.  Let
$C((d_1,\ldots,d_n))$ be the number of graphs that realize the same
degree sequence $(d_1,\ldots,d_n)$.  Thus we obtain:
\begin{theorem}\label{thm:main}
  The total number of labeled graphs that realize the degree sequence
  $(d_1,\ldots,d_n)$ satisfies the recurrence relation
  \begin{equation}
    \label{eq:enumeration}
    C((d_1,\ldots,d_n)) = \sum_{S\in\binom{\{1,\ldots,n-1\}}{d_n}} C((d_1,\ldots,d_{n-1})\slash S)
  \end{equation}
  where we write $\binom{X}{k}$ for the set of all subsets of size $k$
  of a set $X$.
\end{theorem}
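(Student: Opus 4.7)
The plan is to prove the identity by partitioning the set of graphs realizing $(d_1,\ldots,d_n)$ according to the neighborhood of the last vertex $x_n$, and then identifying each block of the partition with graphs on $n-1$ vertices realizing an appropriately reduced degree sequence. The whole argument is essentially a bijective bookkeeping exercise; there is no deep combinatorial content beyond the definition of $C$.

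First I would record two invariance properties of $C$ that are implicitly used in the statement. (a) $C$ depends only on the multiset $\{d_1,\ldots,d_n\}$ and not on its ordering: relabeling $x_i \mapsto x_{\sigma(i)}$ gives a bijection between labeled graphs with $\deg(x_i)=d_i$ and labeled graphs with $\deg(x_i)=d_{\sigma(i)}$. (b) Appending or deleting zeros does not change $C$: a vertex of degree $0$ is isolated, so the assignment ``take the induced subgraph on the non-isolated vertices'' is a bijection between graphs on $\{x_1,\ldots,x_n\}$ realizing $(d_1,\ldots,d_k,0,\ldots,0)$ and graphs on $\{x_1,\ldots,x_k\}$ realizing $(d_1,\ldots,d_k)$. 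Together, (a) and (b) show that $C\bigl((d_1,\ldots,d_{n-1})/S\bigr)$ equals the number of labeled graphs on $\{x_1,\ldots,x_{n-1}\}$ realizing the raw sequence in \eqref{eq:1}, so the right-hand side of \eqref{eq:enumeration} is well-posed.

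Next I would carry out the partition. Let $\mathcal{G}$ be the set of labeled graphs on $\{x_1,\ldots,x_n\}$ realizing $(d_1,\ldots,d_n)$. For each $G\in\mathcal{G}$ the vertex $x_n$ has degree $d_n$, so its neighborhood $N_G(x_n)$ is a subset of $\{1,\ldots,n-1\}$ of size exactly $d_n$. This gives a well-defined map $\mathcal{G}\to \binom{\{1,\ldots,n-1\}}{d_n}$ and hence a disjoint union $\mathcal{G}=\bigsqcup_{S}\mathcal{G}_S$, where $\mathcal{G}_S = \{G\in\mathcal{G} : N_G(x_n)=S\}$. Taking cardinalities reduces the theorem to the claim that $|\mathcal{G}_S| = C\bigl((d_1,\ldots,d_{n-1})/S\bigr)$ for every $S$.

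Finally I would exhibit the bijection establishing that claim. Given $G\in\mathcal{G}_S$, deleting $x_n$ together with its $d_n$ incident edges produces a graph $G'$ on $\{x_1,\ldots,x_{n-1}\}$ in which each $x_i$ has degree $d_i - \chi_S(i)$, i.e.\ $G'$ realizes the sequence \eqref{eq:1}. Conversely, given any graph $G'$ on $\{x_1,\ldots,x_{n-1}\}$ realizing \eqref{eq:1}, reintroducing $x_n$ and joining it to exactly the vertices of $S$ returns a graph in $\mathcal{G}_S$. These two operations are mutually inverse, so $|\mathcal{G}_S|$ equals the number of graphs on $\{x_1,\ldots,x_{n-1}\}$ realizing \eqref{eq:1}, which by (a) and (b) is $C\bigl((d_1,\ldots,d_{n-1})/S\bigr)$. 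Summing over $S$ yields \eqref{eq:enumeration}. The only step that requires a moment's care is the invariance (a)+(b) that lets us pass from the raw reduced sequence to its normalized form; everything else is immediate from the definitions.
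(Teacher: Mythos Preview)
Your proof is correct and follows essentially the same approach as the paper: partition the realizing graphs by the neighborhood of $x_n$, delete $x_n$ to obtain the reduced sequence, and count. You are simply more explicit about the bijection and about the invariance of $C$ under reordering and removal of zeros, which the paper leaves implicit in its definition of $(d_1,\ldots,d_{n-1})/S$.
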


\subsection{The complexity analysis}

In~\cite{McKay83} McKay writes a generating polynomial
\begin{equation}
  \label{eq:McKay}
  f(x) = \prod_{1\leq i<j\leq n} (1 + x_i x_j). 
\end{equation}
in which the coefficient of the monomial $x_1^{m}\cdots x_n^{m}$
yields the number of $m$-regular graphs on $n$-vertices.  If we assume
the complexity of the calculation is given by the number of
multiplications in the product, then the computational complexity of
the generating polynomial is $\C{O}(2^{n^2/2})$.

Let $d=(d_1,\ldots,d_n)$ be a degree sequence, and let us use
$\# C(d)$ for the total number of summands (which is the number of
leaves in the recursion tree) in $C(d)$ in
Equation~\eqref{eq:enumeration} which will be the complexity measure
for our enumeration.

\begin{proposition}\label{prop:complexity}
  Assume there is a fixed upper bound $m$ for the degrees in $d$.
  Then the complexity of the enumeration given in~\eqref{thm:main} is
  $\C{O}(n^{mn})$.  In particular, the enumeration complexity for
  $m$-regular graphs is also $\C{O}(n^{mn})$.
\end{proposition}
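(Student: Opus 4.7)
The plan is to bound the number of leaves $\#C(d)$ in the recursion tree induced by~\eqref{eq:enumeration} by the product of a uniform bound on the branching factor and a bound on the depth of the tree.

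For the branching factor, at a node whose associated sequence has length $k$, the recurrence produces exactly $\binom{k-1}{d_k}$ children, where $d_k$ denotes the last entry of that sequence. Under the standing hypothesis $d_i\leq m$ this is at most $\binom{k-1}{m}\leq\binom{n-1}{m}$. I would then observe that each recursive call in~\eqref{eq:enumeration} subtracts a $\{0,1\}$-valued vector $\chi_S$ from the remaining degrees, so the pointwise bound $d_i\leq m$ is inherited throughout the entire tree. Consequently $\binom{n-1}{m}$ is a uniform upper bound on the branching factor at every internal node.

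For the depth, each application of the recurrence strips off the last coordinate and possibly deletes additional coordinates that have been reduced to $0$, so the length of the sequence strictly decreases at every level. Starting from a sequence of length $n$, the recursion therefore terminates within $n$ levels before reaching one of the base cases laid out just before Theorem~\ref{thm:main}.

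Combining the two estimates gives
\begin{equation*}
  \# C(d)\;\leq\;\binom{n-1}{m}^{\,n}\;\leq\;(n-1)^{mn}\;\leq\;n^{mn},
\end{equation*}
which is the desired $\C{O}(n^{mn})$ bound. The $m$-regular case is then just the specialization $d=(m,\ldots,m)$, for which the hypothesis $d_i\leq m$ holds with equality. I do not anticipate a real obstacle here; the only step that merits a careful sentence is the inheritance of the bound $d_i\leq m$ under the recursive reductions, handled above, which ensures that the uniform branching estimate applies at every descendant rather than only at the root.
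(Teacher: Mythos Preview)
Your approach—bound the recursion tree by (uniform branching factor)$^{\text{depth}}$—is exactly the paper's, and your observation that the bound $d_i\leq m$ is inherited along every branch is the right ingredient. One step needs a small repair, however: the inequality $\binom{k-1}{d_k}\leq\binom{k-1}{m}$ relies on $j\mapsto\binom{k-1}{j}$ being nondecreasing on $[0,m]$, which holds only when $2m\leq k-1$. Once the recursion has shrunk the sequence to length $k\leq 2m$ this can fail (for instance $k-1=3$, $m=3$, $d_k=1$ gives $\binom{3}{1}=3>1=\binom{3}{3}$), so $\binom{n-1}{m}$ is not literally an upper bound on the branching factor at every node.

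The paper handles exactly this wrinkle by unrolling the recursion only down to length $2m$—where the monotonicity is legitimate—and then absorbing the remaining sub-tree, whose size depends only on the fixed parameter $m$, into a constant $C_m$; this yields $\#C(d)\leq\binom{n}{m}^{n-2m}C_m=\C{O}(n^{mn})$. An even simpler patch within your own framework is to bypass the binomial monotonicity altogether and use the cruder estimate $\binom{k-1}{d_k}\leq (k-1)^{d_k}\leq (n-1)^{m}$, valid for all $k\leq n$ and $d_k\leq m$, which then feeds directly into your final chain $\#C(d)\leq (n-1)^{mn}\leq n^{mn}$.
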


\begin{proof}
  As long as $2m\leq n$, the function $\binom{n}{m}$ is increasing in
  $m$.  Then
  \begin{align}
    \label{eq:10}
    \# C((d_1,\ldots,d_n))
    = & \sum_{S\in\binom{\{1,\ldots,n-1\}}{d_n}}\# C((d_1,\ldots,d_{n-1})\slash S)\\
    \leq & \binom{n}{m} \max_{S\in 2^{\{1,\ldots,n-1\}}}\# C((d_1,\ldots,d_{n-1})\slash S)\\
    \leq & \cdots \leq \binom{n}{m}\cdots \binom{2m}{m} \max_{S\in 2^{\{1,\ldots,2m-1\}}} \# C((d_1,\dots,d_{2m-1})\slash S) \\
    \leq & \binom{n}{m}^{n-2m} C_m
  \end{align}
  for some constant $C_m$.  Since $m$ is fixed and $\binom{n}{m}$ is
  of order $n^m$ we get that the number of summands in
  $C((d_1,\ldots,d_n))$ is $\C{O}(n^{m(n-2m)}) = \C{O}(n^{mn})$.
\end{proof}

One can easily see that the enumeration complexity
of~\eqref{eq:enumeration} we obtained in
Proposition~\ref{prop:complexity} is better that the complexity of
generating polynomial for regular graphs.  However, we still have to
work around the fact that the worst-case complexity is factorial-like
with a constant exponent.  Fortunately, one can employ powerful
computational tactics such as dynamic programming or memoization to
improve average complexity of the enumeration since it is
recursive. See the Appendix for how we used memoization to improve
average complexity of our calculations.

\section{Explicit Calculations}
\label{Sect:Calculations}

Let us start with calculating an explicit example by hand. The degree
sequence of the complete graph $K_n$ on $n$-vertices is the constant
sequence $n-1$ of length $n$.  Since one has only one subset of
$\{1,\ldots,n-1\}$ of size $n-1$ we get that
\begin{align}\label{eq:13}
  C((\underbrace{n-1,\ldots,n-1}_\text{$n$-times}))
  = C((\underbrace{n-2,\ldots,n-2}_\text{$n-1$-times}))
  = \cdots = C((1,1)) = C(\epsilon) = 1.
\end{align}
In other words, the labeled complete graph $K_n$ is the only graph
with that specific degree sequence.

\subsection{Enumerating regular graphs}

An $m$-regular graph on $n$-vertices is similar to $K_{m+1}$ in that
it is a graph on $n$-vertices where every vertex has the same constant
degree $m$
\begin{equation}
  \label{eq:3}
   (\underbrace{m,\ldots,m}_{\text{$n$-times}}).
\end{equation}
Now, let us write
\begin{equation}
  \label{eq:4}
  R(n,m) = C((\underbrace{m,\ldots,m}_{\text{$m$-times}})).
\end{equation}
We need to note that $R(n,m)=0$ when $m\geq n$, or when both $n$ and
$m$ are odd since in these cases there are no graphs that can realize
the sequences given in~\eqref{eq:3}.

We calculated $R(n,m)$ for $1\leq n\leq 30$ and $2\leq m\leq 8$.  The
results for $1\leq m\leq 5$ took about 2 minutes while the cases for
$6\leq m\leq 8$ took about 10 minutes on a moderate
computer\footnote{On an Intel i5-8250U CPU working at 1.60GHz with 8Gb
  of RAM on a Linux operating system.}.  These calculations strongly
indicate the average complexity of the enumeration algorithm with
memoization is much better than the worst-case complexity.

We tabulated the results for $2\leq n\leq 15$ in Table~\ref{table:1}.
A smaller version of the tables can be found
at~\cite[pg. 279]{Comtet:AdvancedCombinatorics}, and as the sequence
A295193 at OEIS~\cite{OEIS}.  The individual sequences for
$m=1,\ldots,6$ in Table~\ref{table:1} are respectively the sequences
A001147, A001205, A002829, A005815, A338978 and A339847 at OEIS.

% Table 1 and Table 2

\begin{table}[t]
  \footnotesize
  \begin{tabular}{|r|l|l|l|l|l|}\hline
    $n$ & $m=1$ & $m=2$ & $m=3$ & $m=4$ & $m=5$\\\hline\hline
      2 & 1       &0 & 0 & 0 & 0 \\ \hline
      3 & 0       &1 & 0 & 0 & 0 \\ \hline
      4 & 3       &3 & 1 & 0 & 0 \\ \hline
      5 & 0       &12 & 0 & 1 & 0 \\ \hline
      6 & 15      &70 & 70 & 15 & 1 \\ \hline
      7 & 0       &465 & 0 & 465 & 0 \\ \hline
      8 & 105     &3507 & 19355 & 19355 & 3507 \\ \hline
      9 & 0       &30016 & 0 & 1024380 & 0 \\ \hline
     10 & 945     &286884 & 11180820 & 66462606 & 66462606 \\ \hline
     11 & 0       &3026655 & 0 & 5188453830 & 0 \\ \hline
     12 & 10395   &34944085 & 11555272575 & 480413921130 & 2977635137862 \\ \hline
     13 & 0       &438263364 & 0 & 52113376310985 & 0 \\ \hline
     14 & 135135  &5933502822 & 19506631814670 & 6551246596501035 & 283097260184159421 \\ \hline
     15 & 0       &86248951243 & 0 & 945313907253606891 & 0 \\ \hline
  \end{tabular}
  
  \begin{tabular}{|r|l|l|l|l|}\hline
    $n$ & $m=6$ & $m=7$ & $m=8$ \\\hline\hline
    2 & 0 & 0 & 0 \\ \hline
    3 & 0 & 0 & 0 \\ \hline
    4 & 0 & 0 & 0 \\ \hline
    5 & 0 & 0 & 0 \\ \hline
    6 & 0 & 0 & 0 \\ \hline
    7 & 1 & 0 & 0 \\ \hline
    8 & 105 & 1 & 0 \\ \hline
    9 & 30016 & 0 & 1 \\ \hline
    10 & 11180820 & 286884 & 945 \\ \hline
    11 & 5188453830 & 0 & 3026655 \\ \hline
    12 & 2977635137862 & 480413921130 & 11555272575 \\ \hline
    13 & 2099132870973600 & 0 & 52113376310985 \\ \hline
    14 & 1803595358964773088 & 1803595358964773088 & 283097260184159421 \\ \hline
    15 & 1872726690127181663775 & 0 & 1872726690127181663775 \\ \hline
    %16 & 2329676580698022197516875 & 15138592322753242235338875 & 15138592322753242235338875 \\ \hline
    % 17 & 3443086402825299720403673760 & 0 & 149390880973211821194044293500 \\ \hline
    % 18 & 5997229769947050271535917422040 & 271849772205948458085090804526392 & 1793196665025885172290508971592750 \\ \hline
% 19 & 12218901113752712984458458475480428 & 0 & 26051341898387300707368587445031827810 \\ \hline
% 20 & 28916028252717782814901370042123535900 & 9883018890803233316233360724489799227748 & 455473295761288603097446621939956078133650 \\ \hline
% 21 & 78964245348871129554207357675851322040800 & 0 & 9526590903662623496186073183718443028656311550 \\ \hline
% 22 & 247326784004721245384687960385377976501127680 & 689121157937951859333538097288863665976145304960 & 236946226536588577932827562029441999990638366002150 \\ \hline
% 23 & 883513779022778121434134182300682844206856921535 & 0 & 6966943197297961262303012712858659986651122779167541730 \\ \hline
% 24 & 3580833804060595265972724030024883605216183845836071 & 87732200455287126135563460209581592241959402716247166591 & 240790375061694985654600350990603386365295329818412772266362 \\ \hline
% 25 & 16385912180512407636099645490806488563160196281814916000 & 0 & 9728927362650915085602119026043885987900482051738595530480763625 \\ \hline    
  \end{tabular}
  \normalsize
  \vspace{2mm}  
  \caption{The number of labeled $m$-regular graphs on $n$-vertices
    for $m=1,\ldots,8$.}\label{table:1}
\end{table}

\subsection{Enumerating graphs that realize the same degree sequences as
  binary trees}

Any binary tree with $k+1$-leaves will have $k-1$ internal vertices of
degree 3.  Thus any such tree has the degree sequence
\begin{equation}
  \label{eq:5}
  (\underbrace{3,\ldots,3}_{\text{$k-1$-times}},\underbrace{1,\ldots,1}_{\text{$k+1$-times}}).
\end{equation}
Now, let
\begin{equation}
  \label{eq:6}
  T(k) = \binom{2k}{k-1} C((\underbrace{3,\ldots,3}_{\text{$k-1$-times}},\underbrace{1,\ldots,1}_{\text{$k+1$-times}})).
\end{equation}
be the number of labeled graphs that has the same degree sequence
given in~\eqref{eq:5}.  Notice that we put a correction factor
$\binom{2k}{k-1}$ since in the original enumeration $C(d)$ vertices
are not allowed to change degree.  In the case of regular graphs, one
does not need a correction factor since every vertex has the same
degree.

Using our implementation of the enumeration algorithm we calculated
these numbers on the same setup we described above.  The results are
calculated almost immediately and they are given in
Table~\ref{table:3}.

% Table 3

\begin{table}[h]
  \footnotesize
  \begin{tabular}{{|r|l|}}\hline
    $k$ & $R(k)$ \\\hline\hline
1 & 1 \\\hline
2 & 4 \\\hline
3 & 90 \\\hline
4 & 8400 \\\hline
5 & 1426950 \\\hline
6 & 366153480 \\\hline
7 & 134292027870 \\\hline
8 & 67095690261600 \\\hline
9 & 43893900947947050 \\\hline
10 & 36441011093916429000 \\\hline
11 & 37446160423265535041100 \\\hline
12 & 46669357647008722700474400 \\\hline
13 & 69367722399061403579194432500 \\\hline
14 & 121238024532751529573125745790000 \\\hline
15 & 246171692450596203263023527657431250 \\\hline
% 16 & 574696885550555548873614279418020360000 \\\hline
% 17 & 1528648357350537361575196317338354407383750 \\\hline
% 18 & 4596088010022853824002605075300771574285175000 \\\hline
% 19 & 15510323766640868488575644570711551180769494387500 \\\hline
% 20 & 58381695150853025948853132922461121565896784647500000 \\\hline
% 21 & 243729501193016348972050432369003734301926441635587012500 \\\hline
% 22 & 1122811743410237376894084109366208508515488834433784518550000 \\\hline
% 23 & 5681608398544840648585286854894746662893383437294964982096687500 \\\hline
% 24 & 31446900965455411829754865710791710580022540563740655954757309000000 \\\hline
% 25 & 189652737607011691689037093453195592142251534661561948356227925376562500 \\\hline
  \end{tabular}
  \normalsize
  \vspace{2mm}
  \caption{The number of labeled graphs that realize the same degree
    sequence as any binary tree on $2k$ vertices.}\label{table:3}
\end{table}

In~\cite[pg.6]{Moon70} the number of labeled trees on $n$ vertices
that realize a fixed degree sequence $(d_1,\ldots,d_n)$ is calculated
as
\begin{equation}
  \label{eq:11}
  \binom{n-2}{d_1-1,\ldots,d_n-1}
\end{equation}
which is different than our calculations for the case
$(d_1,\ldots,d_n)$ given as~\eqref{eq:5}.  But note that Moon's
formula enumerates only trees that realize a particular degree
sequence while we count all graphs.

\subsection{Enumerating graphs that realize the same degree sequences as
  complete bipartite graphs}

We fix two positive integers $n\leq m$.  A complete bipartite graph
$K_{n,m}$ contains $n+m$ vertices which is split into two disjoint
sets, say black and white.  Black vertices are connected to every
white vertex and vice versa, but vertices of the same color are not
connected.  Any such graph would have the degree sequence
\begin{equation}
  \label{eq:7}
  (\underbrace{m,\ldots,m}_{\text{$n$-times}},\underbrace{n,\ldots,n}_{\text{$m$-times}})
\end{equation}
Let us write
\begin{equation}
  \label{eq:8}
  K(n,m) =
  \begin{cases}
    \binom{n+m}{n} C((\underbrace{m,\ldots,m}_{\text{$n$-times}},\underbrace{n,\ldots,n}_{\text{$m$-times}})) & \text{ if } n\neq m\\
    C((\underbrace{n,\ldots,n}_{\text{$2n$-times}})) & \text{ if } n=m
  \end{cases}
\end{equation}
for the number of labeled graphs that realize the degree sequence
given in~\eqref{eq:7} for every $m\geq 2$ and $1\leq n\leq m$.  We
tabulated the results for $2\leq m\leq 10$ and $2\leq n\leq 6$ in
Table~\ref{table:4}.  The first column of Table~\ref{table:4} is
A002061 at OEIS.

\begin{table}
  \centering
  \footnotesize
  \begin{tabular}{{|r|l|l|l|l|l|l|l|}}\hline
    $m$ & $n=2$ & $n=3$ & $n=4$ & $n=5$ & $n=6$ \\\hline
    2 & 3 & & & & \\ \hline
    3 & 7 & 70 & & & \\ \hline
    4 & 13 & 553 & 19355 & & \\ \hline
    5 & 21 & 3211 & 527481 & 66462606 & \\ \hline
    6 & 31 & 13621 & 10649191 & 6445097701 & 2977635137862 \\ \hline
    7 & 43 & 44962 & 153984573 & 466128461506 & 1051046246482968 \\ \hline
    8 & 57 & 123145 & 1601363093 & 24363074013321 & 277358348828368109 \\ \hline
    9 & 73 & 293293 & 12389057785 & 905113150135831 & 53355534127828683775 \\ \hline
    10 & 91 & 627571 & 74598011761 & 23985623638038361 & 7334781492338569314961 \\ \hline    
  \end{tabular}
  \normalsize
  \vspace{2mm}
  \caption{The number of labeled graphs that realize the same degree
    sequence as the complete bipartite graph
    $K_{n,m}$.}\label{table:4}
\end{table}

\section*{Appendix: The Code}

Since or implementation is simple and short, we opted to list the code
we used to make our calculations here in an Appendix in
Figure~\ref{fig:1}.  This way, our results can be reproduced and
verified.

We implemented our enumeration using Common Lisp~\cite{Steel:CLTL} and
a suitable memoization to control the depth of the recursive
calls. However, due to efficiency issues of the data structures we
use, our degree sequences are non-decreasing instead of being
non-increasing.  We used SBCL version 2.0.11 to run the lisp
code~\cite{SBCL}.

Our enumeration calculation requires us to calculate a finite number
of shorter degree sequences in each call. When we employ memoization,
we use a global table of already calculated results. If an enumeration
on a shorter degree sequence is needed, and if the result is already
calculated for another branch of the recursive call we recall the
result instead of calculating it from scratch.

\begin{figure}[h]
\begin{lstlisting}
(defun subsets (k xs)
   (cond ((null xs) 'nil)
         ((= 1 k) (loop for x in xs collect (list x)))
         (t (union (subsets k (cdr xs))
                   (mapcar (lambda (x) (cons (car xs) x))
                           (subsets (1- k) (cdr xs)))))))
\end{lstlisting}
\begin{lstlisting}
(defun new-degree-sequence (ds is)
  (let ((ys (copy-list (cdr ds))))
    (dolist (i is) (decf (nth i ys)))
    (delete 0 (sort ys #'<))))  
\end{lstlisting}
\begin{lstlisting}
(let ((table (make-hash-table :test #'equal)))
  (defun graph-count (ds)
    (cond 
       ((null ds) 1)
       ((oddp (reduce #'+ ds)) 0)
       (t (or (gethash ds table)
              (setf (gethash ds table)
                    (let* ((index-set (loop for i from 0 below (1- (length ds))
                                           collect i))
                           (all-subsets (subsets (car ds) index-set)))
                       (loop for xs in all-subsets sum
                            (graph-count (new-degree-sequence ds xs))))))))))
\end{lstlisting}
  \caption{Common Lisp implementation of the enumeration algorithm
    given in Theorem~\ref{thm:main}.}\label{fig:1}
\end{figure}

%\bibliographystyle{plain}
%\bibliography{references}

\end{document}